\documentclass[11pt]{amsart}
\usepackage{graphicx, subfigure}
\usepackage{txfonts}

\newtheorem{theorem}{Theorem}[section]
\newtheorem{proposition}[theorem]{Proposition}
\newtheorem{lemma}[theorem]{Lemma}

\theoremstyle{remark}

\voffset-1cm
\textheight21cm
\textwidth15.5cm
\oddsidemargin.75cm
\evensidemargin.75cm

\begin{document}
\baselineskip 16pt

\thispagestyle{empty}

\begin{center}\sf
{\Large Slow motion for compressible}\vskip.15cm
{\Large isentropic Navier--Stokes equations}\vskip.5cm

{\tt \today}\vskip.25cm

Corrado MASCIA\footnote{Dipartimento di Matematica ``G. Castelnuovo'', 
	Sapienza, Universit\`a di Roma, P.le Aldo Moro, 2 -- 00185 Roma (ITALY),
	\&
	Istituto per le Applicazioni del Calcolo, 
	Consiglio Nazionale delle Ricerche (ITALY),
	associated in the framework of the program ``Intracellular Signalling'',
  	\texttt{\tiny mascia@mat.uniroma1.it}},
Marta STRANI\footnote{Dipartimento di Matematica ``G. Castelnuovo'',
  	Sapienza -- Universit\`a di Roma, P.le Aldo Moro, 2 - 00185 Roma (ITALY),
  	\texttt{\tiny strani@mat.uniroma1.it}}
\end{center}
\vskip.5cm

\begin{quote}\small \baselineskip 14pt 
{\sf Abstract.}
We consider the compressible Navier--Stokes equations for isentropic dynamics
with real viscosity on a bounded interval.
In the case of boundary data defining an admissible shock wave for the corresponding
unviscous hyperbolic system, we determine a scalar differential equation describing 
the motion of the internal transition layer.
In particular, for viscosity $\varepsilon$ small, the velocity of the motion is exponentially small.
The approach is based on the construction of a 1-parameter manifold of approximate
solutions and on an appropriate projection of the evolution of the complete Navier--Stokes
system towards such manifold.
\vskip.15cm

{\sf Keywords.} Navier--Stokes equations; Metastability; Hyperbolic--parabolic systems.
\vskip.15cm

{\sf AMS subject classifications.} Primary 76N99; Secondary 35B25, 35Q35. 
\end{quote}

\baselineskip=16pt

\section{Introduction}\label{sec:introduction}

This article is devoted to the description of slow-motion for the hyperbolic-parabolic
Navier--Stokes system for compressible isentropic fluid with real viscosity, that is,
in term of the variables density/velocity $(\rho,w)$
\begin{equation}\label{isentro}
	\frac{\partial \rho}{\partial t}+\frac{\partial(\rho\,w)}{\partial x}=0,\qquad
	\frac{\partial(\rho\,w)}{\partial t}+\frac{\partial}{\partial x}\left\{\rho\,w^2+ P(\rho)
			-\varepsilon\nu(\rho)\frac{\partial w}{\partial x}\right\}=0.
\end{equation}
The functions definining the pressure $P$ and the viscosity $\nu$ are required to
satisfy the standard assumptions
\begin{equation*}
	P'(\rho)>0,\qquad P''(\rho)>0,\qquad \nu(\rho)>0
\end{equation*}
for any $\rho$ under consideration.
The relevant cases $P(\rho)=C\,\rho^\alpha$ with $\alpha>1$ and $\nu(\rho)=C\,\rho^\beta$
with $\alpha,\beta\geq 1$ and $C>0$ fit into the general framework
In particular, shallow water Saint--Venant system with viscosity corresponds to the
case $\alpha=2$, $\beta=1$ (see \cite{GerbPert01} for the derivation of the model
and \cite{BresDesjMeti07} for a recent review on shallow water equations).

Given $\ell>0$, the space variable $x$ belongs to the bounded interval $(-\ell,\ell)$
and system \eqref{isentro} is complemented with boundary conditions
\begin{equation*}
	\rho(-\ell)=\rho_-\,\quad w(\pm\ell)=w_\pm>0.
\end{equation*}
Let us set
\begin{equation*}
	F(u,v)=\frac{v^2}{u}+ P(u).
\end{equation*}
Considering the variables density/momentum $(u,v)=(\rho,\rho\,w)$,
system \eqref{isentro} becomes
\begin{equation}\label{equv}
	\frac{\partial u}{\partial t}+\frac{\partial v}{\partial x}=0,\qquad
	\frac{\partial v}{\partial t}+\frac{\partial}{\partial x}
		\left\{F(u,v)-\varepsilon\nu(u)\frac{\partial}{\partial x}\left(\frac{v}{u}\right)\right\}=0,
\end{equation}
with the boundary conditions
\begin{equation}\label{bdaryuv}
	w_\pm\,u(\pm\ell)-v(\pm\ell)=0,\qquad	v(-\ell)=\rho_-w_-
\end{equation}
In the limiting case $\ell\to +\infty$, system \eqref{equv} is known to support 
traveling wave solutions, i.e. solution with the form $(u,v)=(U(x-ct),V(x-ct))$ 
satisfying the asymptotic conditions
\begin{equation*}
	(U,V)(-\infty)=(u_-,v_-),\qquad (U,V)(+\infty)=(u_+,v_+)
\end{equation*}
for appropriate choices of $(u_\pm,v_\pm)$ and $c$ given by the usual
Rankine--Hugoniot relation.
Thanks to the galileian invariance of \eqref{equv}, we may assume, without loss
of generality, the speed $c$ to be zero.
In such a case, the component $V$ turns to be constant, so that $v_+$ and $v_-$ are
forced to be equal to a common value, denoted here by $v_\ast$.
Hence, the equivalent form of \eqref{bdaryuv} for the special solution $(U,V)$ is
\begin{equation*}
	U(\pm\ell)=u_\pm:=\frac{v_\ast}{w_\pm},\qquad		V(-\ell)=v_\ast
\end{equation*}
The wave solution $(U,V)$ belongs to a 1-parameter family of stationary solutions
to \eqref{equv} generated by the space translation group.
The stability analysis of such family has been explored for a long time and 
a number of orbital stability results for different regimes and structure functions has
been proved (see \cite{MatsNish85, BHRZ08, HumpLafiZumb10, MatsWang10}).

The analysis of the dynamics in bounded domains has been also investigated
(among others, we quote \cite{BresDesjGera07, LiLiXin08, LianGuoLi10}), but 
always with a limited attention to the dynamical behavior close to equilibrium
configurations.
In term of special solution, in bounded intervals $(-\ell,\ell)$ with fixed $\ell>0$,
the situation is different with respect to case of the whole real line.
Formally setting $\varepsilon=0$ in \eqref{equv}, there still exists a one-parameter
family of steady states given by a sharp transition at some point $\xi\in(-\ell,\ell)$.
Differently,  for $\varepsilon>0$, there exists a single steady state satisfying the boundary
conditions \eqref{bdaryuv} with $w_\pm, v_-$ (or, equivalently,
$u_\pm, v_\ast$) chosen so that there exists a stationary wave in the whole real line
with same asymptotic states.

As a consequence, in analogy to the case of scalar viscous conservation laws, it is expected
that, in the regime of $\varepsilon$ small, the solution determined by an initial datum consisting
of a single transition from $(u_-,v_\ast)$ to $(u_+,v_\ast)$ converges in a short timescale to a
specific profile with transition located at some point $\xi=\xi(t)$ and, then, on a much longer
timescale, moves to the location of the single steady state.
Such motion is an effect of the boundary data and it is expected to be very slow.
Our aim is to derive formally a differential equation for the location $\xi$ and to show that 
the motion of $\xi$ is indeed exponentially slow for $\varepsilon\approx 0$.
The first tool is the definition of an {\it approximate invariant manifold}
$\{W(\cdot,\xi)\,:\,\xi\in(-\ell,\ell)\}$ whose elements are approximate steady state 
of \eqref{equv} and resemble, in a sense, transitions from $(u_-,v_\ast)$ to $(u_+,v_\ast)$
located at $\xi$.
Among the many different and significant choices for constructing such manifold,
our preference goes to the one we explored in \cite{MascStra13} for the case
of scalar conservation laws and that consists in matching exact steady states
in $(-\ell,\xi)$ and in $(\xi,\ell)$ at $x=\xi$ by imposing some appropriate conditions.
Assuming that the spectrum of the linear operator $\mathcal{L}_\xi$, obtained by
linearing \eqref{equv} at $W(\cdot,\xi)$, has a first eigenvalue that is real and simple,
applying a projection method in the spirit of
 \cite{ReynWard95b}, we determine the equation (see Section \ref{sec:compressible})
\begin{equation*}
	\frac{d\xi}{dt}=-\frac{\psi(\xi,\xi)}{\phi(\xi,\xi)}\frac{\kappa_+(\xi)-\kappa_-(\xi)}{u_+-u_-}.
\end{equation*}
where $\bigl(\phi(\cdot,\xi),\psi(\cdot,\xi)\bigr)$ is the first eigenfunction of the adjoint
operator $\mathcal{L}_\xi^\ast$ and the functions $\kappa_\pm$ are implicitly defined by
\begin{equation*}
	\int_{u_\ast}^{u_\pm} \frac{v_\ast\,\nu(s)}{s^2(\kappa_\pm-F(s,v_\ast))}\,ds
	=\pm\frac{\ell\mp\xi}{\varepsilon}
\end{equation*}
where $u_\ast$ is the value determined by the condition $P'(u_\ast)u_\ast^2=v_\ast^2$.
The difference function $\xi\mapsto \kappa_+(\xi)-\kappa_-(\xi)$ is continuous,
monotone increasing, diverges at $\pm\infty$ as $\xi\to\pm\ell$; hence, the above
differential equation possesses a single equilibrium point $\xi_\ast$, corresponding
to the unique stationary solution of the problem.

For small $\varepsilon$, the elements of the approximate manifolds tend to a piecewise
constant configuration, with a single jump located at $\xi$.
Thus, it is possible to determine the leading term in the expression for the eigenfuntion
$(\phi,\psi)$ of the adjoint operator $\mathcal{L}_\xi^\ast$ and to obtain a new version
of the motion equation
\begin{equation*}
	\frac{d\xi}{dt}=-\frac{(\xi+\ell)\lambda_1(\xi)}{\partial_u F(u_-,v_\ast)}
		\frac{\kappa_+(\xi)-\kappa_-(\xi)}{u_+-u_-}.
\end{equation*}
where $\lambda_1(\xi)$ is the first eigenvalue of the operator $\mathcal{L}_\xi$.
Let us stress that, at such step, the choice of the boundary conditions is particularly
relevant, since it determines the specific structure of the eigenfunction $(\phi,\psi)$.

As it should be, the behavior of $\lambda_1$ for small $\varepsilon$ plays
a crucial r\^ole (details are given in Section \ref{sec:compressible}).
Extrapolating from \cite{KreiKreiLore08}, that concerns with general systems of 
conservation laws with a (non-physical) second-order parabolic term, and by
the quoted results on asymptotic stability of steady waves on the whole real line,
we expect that $\lambda_1$ is (exponentially) small and negative.
We are not aware of any availabe rigorous result in this sense at the present time.

Assuming that such eigenvalue stayes bounded in the limit $\varepsilon\to 0^+$,
the speed of motion along the approximate manifold, is determined by the functions
$\kappa_\pm$, whose leading terms can be determined starting from their definitions,
obtaining the final form for the motion's equation
\begin{equation*}
	\frac{d\xi}{dt}=-\biggl\{\frac{\partial_u F^+}{\partial_u F^-}\frac{u_+-u_\ast}{u_+-u_-}
		\exp\left(-\frac{\partial_u F^+}{\partial_u G^+}\frac{\ell-\xi}{\varepsilon}\right)
			+\frac{u_\ast-u_-}{u_+-u_-}
	\exp\left(\frac{\partial_u F^-}{\partial_u G^-}\frac{\xi+\ell}{\varepsilon}\right)\biggr\}
		(\xi+\ell)\lambda_1(\xi)
\end{equation*}
where $\partial_u G(u,v)=v\nu(u)/u^2$ and the upper scripts ${}^\pm$ indicate that the
function is calculated at $(u_\pm,v_\ast)$.
In particular, for small $\varepsilon$, the motion is exponentially slow.
The stability of the equilibrium point $\xi_\ast$ is still encoded in the sign of the 
first eigenvalue $\lambda_1$ that the present analysis is not able to reveal.

This paper follows the research line on metastable behaviors for conservation laws
widely explored in the last decades.
The first contribution has been the pioneering article \cite{KreiKrei86} concerning the
analysis of the scalar Burgers equation, that has been also the subject of 
\cite{ReynWard95b} (based on the use of  {\it projection method} and {\it WKB expansions})
and \cite{LafoOMal95a} (standing on an adapted version of the {\it method of matched
asymptotics expansion}).
A rigorous analysis has been performed in \cite{deGrKara98, deGrKara01}), where 
the one-parameter family of reference functions is chosen as a family of traveling wave
solutions to the viscous equation satisfying the boundary conditions and with non-zero 
velocity. 
Slow motion for the viscous Burgers equation in unbounded domains has been also 
considered in literature: the case of the half-line $(0,+\infty)$ has been treated in
\cite{Shih95, LiuYu97, Nish01}; while the case of whole real line  has been examined
in \cite{KimTzav01, KimNi02, BeckWayn09} (with emphasis on  the generation of $N-$wave
like structures and their evolution towards nonlinear diffusion waves).

Despite of the wide number of contributions to the stability of traveling profiles in the whole
real line, results relative to slow motion and metastable behavior in the case of systems
of conservation laws appear to be rare. 
We are only aware of \cite{HubeSerr96} (that uses asymptotic expansions to deal with 
systems of conservation laws, with model examples being the Navier-Stokes equations
of compressible viscous heat  conductive fluid and the Keyfitz-Kranzer system, arising in elasticity),
\cite{KreiKreiLore08} (that deals with the problem of proving convergence to a stationary solution
for a system of conservation laws with viscosity, with an approach based on the analysis of the
linearized operator at the steady state)
and \cite{BJRZ11} (that addresses to the Saint-Venant equations for shallow water and, precisely,
the phenomenon of formation of roll-waves, by means of a combination of analytical techniques
and numerical results).

In this respect, we consider our contribution, even if mainly based on formal arguments, 
original and hopefully stimulating for people working in the area of dynamical properties of
solutions to systems of conservation laws.
Specifically, it seems that the equation we propose for the motion of the transition layer $\xi$
(that is the slow dynamics along the approximate equilibrium manifold) is the first attempt
in this direction for isentropic Navier--Stokes equations for compressible fluids.

The article is divided into three more Sections.
In Section 2, we present the general procedure to derive formal equation for the motion along
an approximate equilibirum manifold in the case of general hyperbolic--parabolic systems.
For pedagogical reasons, we also show how the approach simplifies in the case of scalar
Burgers equation.
Section 3 is the heart of the paper. 
It deals with isentropic Navier--Stokes equation for compressible fluids, written in the
form \eqref{equv}.
After recasting the admissibility condition for entropic jumps in the unviscous case,
we build up the approximate equilibrium manifold, working with time-independent
solutions and matching them at $x=\xi$ by means of appropriate transmission condition.
Using such special approximate solutions, we are able to determine an equation describing
the evolution along the manifold.
In order to determine an explicit expression for the ratio $\psi(\xi,\xi)/\phi(\xi,\xi)$ appearing
in such equation for the motion along the manifold, we analyze the eigenvalue problem for
the adjoint operator and we deduce an approximated version of its solutions for the
regime $\varepsilon$ small, by approximating the element of the manifold
to be piecewise constant functions with a single jump located at $\xi$.
Finally, in order to get an ultimate version of the motion's equation,
we determine the leading term for the functions $\kappa_\pm$ in the limit
$\varepsilon\to 0^+$, 
Section 4 contains the conclusions and it is mainly dedicated to propose a number of
eventual research direction motivated by the present work.

\section{Scalar reduced dynamics for hyperbolic--parabolic systems}\label{sec:scalar}

As a first step, we present the strategy to obtain approximate equation for the motion
along an approximate manifold of solutions to a general hyperbolic-parabolic system
having the form
\begin{equation}\label{hyppar}
	\frac{\partial w}{\partial t}+\frac{\partial}{\partial x}\left\{f(w)
		-\varepsilon b(w)\,\frac{\partial w}{\partial x}\right\}=0,
\end{equation}
where $w=w(x,t)\in\mathbb{R}^n$, $x\in I$, $t>0$.
Our target is to apply such approach to the case of isentropic Navier--Stokes equation.
Thus, for the sake of simplicity, we do not state precise assumptions on the
structure functions $f$ and $b$ and we proceed in a purely formal way.
In any case, we expect the procedure to be meaningful for the usual class of 
hyperbolic--parabolic systems considered in the recent literature
(see \cite{ShizKawa85} and descendants).

Given $T>0$, we consider the initial-boundary value problem for \eqref{hyppar}
determined by the conditions
\begin{equation}\label{initialbdary}
	w \bigr|_{t=0}=w_0
\end{equation}
complemented with appropriate boundary conditions.

Given an open interval $J$ and a one-parameter family of functions
\begin{equation*}
	\{W(\cdot;\xi)\in [H^1(I)]^n\,:\,\xi\in J\}
\end{equation*}
satisfying the boundary conditions,
let $\xi\mapsto R^\varepsilon(\cdot;\xi)$ be the distribution-valued map defined by
\begin{equation}\label{rest}
	\langle R(\cdot;\xi),\varphi\rangle:=-\int_{I} \Bigl\{\varepsilon b(W)
		\frac{\partial W}{\partial x}-f(W)\Bigr\}\cdot\frac{d\Phi}{dx}\,dx
\end{equation}
for any continuously differentiable function $\Phi\,:\,I\to\mathbb{R}^n$ with $\Phi(\pm\ell)=0$.
In what follows, we call $R(\cdot;\xi)$ the {\sf residual} of $W(\cdot;\xi)$ with respect to
equation \eqref{hyppar}.
The family $\{W(\cdot;\xi)\}$ is considered as an {\it approximate invariant manifold} for \eqref{hyppar},
in the sense that the residuals $R(\cdot;\xi)$ vanishes as $\varepsilon\to 0^+$ (in a sense to be made
precise) for any $\xi\in J$.

Next, we look for solutions to \eqref{hyppar}--\eqref{initialbdary} in the form 
\begin{equation*}
	w(\cdot,t)=W(\cdot;\xi(t))+z(\cdot,t)
\end{equation*} 
with unknown $\xi=\xi(t)$  and $z=z(\cdot,t)$ to be determined.
Substituting into \eqref{hyppar} and disregarding the nonlinear terms in $v$,
we obtain an approximated equation for the perturbation $z$
\begin{equation}\label{eqz}
	\frac{\partial z}{\partial t}=\mathcal{L}_\xi z+R(\cdot;\xi)
		-\frac{\partial W}{\partial\xi}(\cdot;\xi)\,\frac{d\xi}{dt}
\end{equation}
where $R(\cdot;\xi)$ is the distribution defined in \eqref{rest} and $\mathcal{L}_\xi$
is the linearized operator at $W(\cdot;\xi)$, i.e.
\begin{equation*}
	\mathcal{L}_\xi z:=\frac{\partial}{\partial x}\left\{\varepsilon b(W)\frac{\partial z}{\partial x}
	+db(W)\,z\,\frac{\partial W}{\partial x}-df(W)z\right\}
\end{equation*}
where we use the notation
\begin{equation*}
	(db(W)zw)_i:=\sum_{j,k} \frac{\partial b_{ij}}{\partial w_k}(W) z_k w_j
	\qquad\qquad\textrm{with } b=(b_{ij}).
\end{equation*}
Next, let us assume that, for any $\xi\in I$, the operator $\mathcal{L}_\xi$ has a first eigenvalue
$\lambda_1(\xi)$ that is real and simple.
Let $\mathcal{L}_\xi^{\ast}$ be the adjoint operator of $\mathcal{L}_\xi$
\begin{equation*}
	\mathcal{L}_\xi^{\ast} z:=\frac{\partial}{\partial x}\left\{\varepsilon b(W)^t\frac{\partial z}{\partial x}\right\}
	-\left(Db(W)\,\frac{\partial W}{\partial x}-df(W)^t \right)\frac{\partial z}{\partial x}
\end{equation*}
where ${}^{t}$ denote the transpose and 
\begin{equation*}
	(Db(W)wz)_k:=\sum_{i,j} \frac{\partial b_{ij}}{\partial w_k}(W) z_i w_j
	\qquad\qquad\textrm{with } b=(b_{ij}).
\end{equation*}
Given $\ell>0$, set $I:=(-\ell,\ell)$ and
\begin{equation*}
	\langle u, v\rangle:=\int_{-\ell}^{\ell} u(x)\cdot v(x)\,dx
	\qquad\qquad u,v\in [L^2(I)]^n,
\end{equation*}
where $\cdot$ denotes the usual scalar product in $\mathbb{R}^n$.
Denoting by $\omega_1=\omega_1(\cdot;\xi)$ an eigenfunction of the adjoint operator 
relative to the first eigenvalue and setting 
\begin{equation*}
	z_1=z_1(\xi;t):=\langle \omega_1(\cdot;\xi),z(\cdot,t)\rangle, 
\end{equation*}
we determine a differential equation for the function $t\mapsto \xi(t)$ by imposing
that  the component $z_1$  is identically zero, that is
\begin{equation*}
	\frac{d}{dt} \langle \omega_1(\cdot;\xi(t)), z(\cdot,t) \rangle =0
	\qquad\textrm{and}\qquad
	\langle \omega_1(\cdot;\xi_0), z(\cdot,0))\rangle=0.
\end{equation*}
Using equation \eqref{eqz}, we infer
\begin{equation*}
	\langle \omega_1(\cdot;\xi),\mathcal{L}_\xi z+ R(\cdot;\xi)
		-\frac{\partial W}{\partial \xi}(\cdot;\xi)\frac{d\xi}{dt}\rangle 
		+ \langle \frac{\partial \omega_1}{\partial \xi}\frac{d\xi}{dt},z \rangle =0
\end{equation*}
Since $\langle \omega_1, {\mathcal L}_{\xi}z \rangle= \lambda_1\langle \omega_1, z \rangle$, 
we obtain a scalar differential equation for the variable $\xi$, 
the latter equation can be rewritten as
\begin{equation*}
		\left\{\langle \omega_1,\frac{\partial W}{\partial \xi}\rangle 
		- \langle \frac{\partial \omega_1}{\partial \xi},z \rangle\right\}
		\frac{d\xi}{dt}=\langle \omega_1,R \rangle
\end{equation*}
to be considered together with the condition on the initial datum $\xi_0$
\begin{equation*}
\langle \omega_1(\cdot;\xi_0), z(\cdot,0) \rangle =0.
\end{equation*}
Neclecting the second term in the coefficient of the derivative of $\xi$,
we end up with an (approximated) equation for the motion along the
manifold $\{W(\cdot;\xi)\}$
\begin{equation}\label{approxedo}
	\langle \omega_1(\cdot;\xi),\frac{\partial W}{\partial\xi}(\cdot;\xi)\rangle\frac{d\xi}{dt}
	=\langle \omega_1(\cdot;\xi),R(\cdot;\xi) \rangle.
\end{equation}
Our next effort is to determine a practical version of the above equation in the limiting regime
$\varepsilon\to 0^+$ and for the specific case of hyperbolic--parabolic systems \eqref{hyppar}.

First of all, given $\xi\in I=(-\ell,\ell)$, we require the function $W(\cdot;\xi)$ to converge
in the limit $\varepsilon\to 0^+$ to the step function jumping at $x=\xi$ from $w_-$ to $w_+$
\begin{equation}\label{limitW}
	\lim_{\varepsilon\to 0^+} W(x,\xi)=w_-\chi_{{}_{(-\ell,\xi)}}(x)+w_+\chi_{{}_{(\xi,+\ell)}}(x)
\end{equation}
If the limit is in the sense of distributions, approximating the increment ratio for $h>0$ by
\begin{equation*}
	\frac{W(x;\xi+h)-W(x,\xi)}{h}\approx -\frac{1}{h}[w]\chi_{{}_{(\xi,\xi+h)}}(x)
\end{equation*}
we infer the asymptotic representation
\begin{equation*}
	\frac{\partial W}{\partial\xi}(\cdot;\xi)=-[w]\delta_{\xi}(\cdot)+o(1)
	\qquad\textrm{as }\varepsilon\to 0^+,
\end{equation*}
where $[w]:=w_+-w_-$ and $\delta_{\xi}$ is the Dirac distribution concentrated at $\xi$.
Thus, for small $\varepsilon$, there hold
\begin{equation*}
	\langle \omega_1(\cdot;\xi),\frac{\partial W}{\partial\xi}(\cdot;\xi)\rangle
	=-\omega_1(\xi;\xi)\cdot [w].
\end{equation*}
Finally, assuming $W$ to be chosen so that $R$ is a Dirac distribution concentrated at $\xi$,
\begin{equation}\label{deltaresidual}
	R(x,\xi)=r(\xi)\delta_{\xi}(x)
\end{equation}
for some function $r=r(\xi)$, we deduce our final expression for the reduced dynamics
\eqref{approxedo} along the manifold $\{W(\cdot;\xi)\}$
\begin{equation}\label{rededo}
	\frac{d\xi}{dt}=\theta(\xi):= -\frac{\omega_1(\xi;\xi)\cdot r(\xi)}{\omega_1(\xi;\xi)\cdot [w]}.
\end{equation}
In order to make equation \eqref{rededo}, it is necessary to determine the specific
expression of the function $r$, describing the residual of $W(\cdot;\xi)$, and 
the ratio between the components of the first eigenfunction $\omega_1$ of the 
adjoint operator in the direction of $r$ and in the direction of the jump $[w]$.

\subsection*{Viscous Burgers equation}\label{subsec:expected}

Let us consider the case of the scalar Burgers equation with viscosity
\begin{equation}\label{burgers}
	\frac{\partial w}{\partial t}+\frac{\partial}{\partial x}\left\{\frac12 w^2
		-\varepsilon \frac{\partial w}{\partial x}\right\}=0.
\end{equation}
with boundary conditions
\begin{equation*}
	w(-\ell,t)=\bar w,\qquad w(+\ell,t)=-\bar w
\end{equation*}
for some given $\bar w>0$.
In this case $[w]=-2\overline{w}$ and equation \eqref{rededo} reduces to
\begin{equation}\label{slowburgers}
	\frac{d\xi}{dt}=\frac{r(\xi)}{2\overline{w}},
\end{equation}
where $r(\xi)$ is the residual of $W(\cdot;\xi)$. 

In order to satisfy the requirement \eqref{deltaresidual},
we consider a specific approximate invariant manifold:
for $\xi\in I$, we build $W(x,\xi)$ by matching two steady
states of the equation with appropriate boundary conditions.
Namely, we set
\begin{equation*}
	W(x,\xi):=\left\{\begin{aligned}
		&W_-(x,\xi) 	&\qquad &	-\ell<x<\xi<\ell \\
		&W_+(x,\xi)	&\qquad &-\ell<\xi<x<\ell ,
           \end{aligned}\right.
\end{equation*}
where $W_\pm$ are steady state of \eqref{burgers} in $(-\ell,\xi)$ and $(\xi,\ell)$,
such that
\begin{equation*}
	W_-(-\ell;\xi)=\overline{w},\qquad W_-(\xi;\xi)=W_+(\xi;\xi)=0,\qquad W_+(\ell;\xi)=-\overline{w}.
\end{equation*}
Functions $W_\pm$ can be expressed by means of an implicit formula.
For $\overline{w}>0$, let us set
\begin{equation*}
	\Sigma:=\left\{(w,\kappa)\,:\,w\in(-\overline{w},\overline{w}),\; \kappa>w^2/2\right\}.
\end{equation*}
Then,  defining the function $\Gamma=\Gamma(w,\kappa)$ with $(w,\kappa)\in\Sigma$ by 
\begin{equation*}
	\Gamma(w;\kappa):=\int_{0}^{w} \frac{ds}{\kappa-s^2/2}
	=\sqrt{\frac{2}{\kappa}}\tanh^{-1}\left(\frac{w}{\sqrt{2\kappa}}\right),
\end{equation*}
functions $W_\pm$ are implictly given by
\begin{equation}\label{implicitW}
	\varepsilon\,\Gamma(W_\pm(x,\xi),\kappa_\pm)=\xi-x.
\end{equation}
where the values $\kappa_\pm=\kappa_\pm(\xi)$ are uniquely determined by the conditions
\begin{equation}\label{defkappapm}
	\varepsilon\,\Gamma(\pm\overline{w},\kappa_\mp)=\xi\pm\ell.
\end{equation}
Since $W_-$ and $W_+$ are steady states of \eqref{burgers} in $(-\ell,\xi)$ and $(\xi,\ell)$,
respectively, we deduce, integrating by parts, that the residual $R$ is 
\begin{equation*}
	\begin{aligned}
	\langle R(\cdot;\xi),\Phi\rangle&=-\int_{-\ell}^{\xi} \Bigl\{\varepsilon 
			\frac{\partial W_-}{\partial x}-\frac12 W^2_-\Bigr\}\frac{d\Phi}{dx}\,dx
			-\int_{\xi}^{\ell} \Bigl\{\varepsilon 
			\frac{\partial W_+}{\partial x}-\frac12 W^2_+\Bigr\}\frac{d\Phi}{dx}\,dx\\
		&=-\varepsilon\frac{\partial W_-}{\partial x}(\xi)\Phi(\xi)
			+\varepsilon\frac{\partial W_+}{\partial x}(\xi)\Phi(\xi)
			=\varepsilon\left[\frac{\partial W}{\partial x}\right]_{\xi}\,\Phi(\xi)
	\end{aligned}
\end{equation*}
Differentiating \eqref{implicitW} with respect to $x$, we get
\begin{equation}
	\varepsilon\frac{\partial W_\pm}{\partial x}(x,\xi)
	=\frac{1}{2}W_\pm^2(x,\xi)-\kappa_\pm,
\end{equation}
thus, in the notation of \eqref{deltaresidual}, there holds
\begin{equation*}
	r(\xi)=\varepsilon\left[\frac{\partial W}{\partial x}\right]_{\xi}
		=\varepsilon\frac{\partial W_+}{\partial x}(\xi;\xi)-\varepsilon\frac{\partial W_-}{\partial x}(\xi;\xi)
		=\kappa_-(\xi)-\kappa_+(\xi),
\end{equation*}
giving an ``almost explicit'' expression for \eqref{slowburgers}.

Since we are considering the regime $\varepsilon\to 0^+$, it is possible to
approximate the formulas defining $\kappa_\pm$ and obtain a simpler o.d.e.
describing the slow dynamics.
Handling the explicit expression for the function $\Gamma$,
conditions \eqref{defkappapm} become
\begin{equation*}
	\sqrt{2\kappa_\mp}\tanh\left\{\frac{\ell\pm\xi}{\varepsilon}
		\sqrt{\frac{\kappa_\mp}{2}}\right\}=\overline{w}.
\end{equation*}
In particular, in the limit $\varepsilon\to 0^+$, we get
\begin{equation*}
	\lim_{\varepsilon\to 0^+} \kappa_\pm(\xi)=\frac12\overline{w}^2.
\end{equation*}
Thus, as $\varepsilon\to 0^+$, there approximately holds
\begin{equation*}
	\kappa_\mp\approx \frac{\overline{w}^2}{2\tanh\left\{\overline{w}(\ell\pm\xi)/2\varepsilon\right\}^2}
		\approx \frac{1}{2}\overline{w}^2\left\{1
			+4\exp\left(-\frac{\overline{w}}{\varepsilon}(\ell\pm\xi)\right)\right\}
\end{equation*}
Collecting, we end up with the equation
\begin{equation*}
	\frac{d\xi}{dt}\approx \overline{w}
		\left\{\exp\left(-\frac{\overline{w}}{\varepsilon}(\ell+\xi)\right)
			-\exp\left(-\frac{\overline{w}}{\varepsilon}(\ell-\xi)\right)\right\}
\end{equation*}
corresponding to the formula determined in \cite{ReynWard95b}.

\section{Compressible isentropic Navier--Stokes equations}\label{sec:compressible}

Given smooth functions $P=P(u)$ and $\nu=\nu(u)$,
let us consider the hyperbolic-parabolic system for compressible
isentropic fluids
\begin{equation}\label{ns}
	\frac{\partial u}{\partial t}+\frac{\partial v}{\partial x}=0,\qquad
	\frac{\partial v}{\partial t}+\frac{\partial}{\partial x}\left\{\frac{v^2}{u}+ P(u)
			-\varepsilon\nu(u)\frac{\partial}{\partial x}\left(\frac{v}{u}\right)\right\}=0.
\end{equation}
where the pressure $P$ is such that $P'(u)>0$ and $P''(u)>0$ and the
viscosity $\nu$ is such that $\nu(u)>0$ for any $u$ under consideration.

System \eqref{ns} is considered for $x\in(-\ell,\ell)$ together with the boundary conditions
\begin{equation}\label{bdaryuv2}
	v_\pm\,u(\pm\ell)-u_\pm\,v(\pm\ell)=0,\qquad	v(-\ell)=v_-
\end{equation}
for some $u_\pm, v_\pm$ with the value $v_-$ being strictly positive.

\subsection*{Admissible jumps in the vanishing viscosity limit}\label{subsec:admissible}

As a first step, let us consider the limiting unviscous regime $\varepsilon \to 0^+$.
Putting formally $\varepsilon=0$ in \eqref{ns},
we obtain the hyperbolic system for unviscous isentropic fluids
\begin{equation}\label{euler}
	\frac{\partial u}{\partial t}+\frac{\partial v}{\partial x}=0,\qquad
	\frac{\partial v}{\partial t}+\frac{\partial}{\partial x}\left\{\frac{v^2}{u}+ P(u)\right\}=0.
\end{equation}
Physical solutions are weak solutions to \eqref{euler} satisfying appropriate jump conditions
determined by the couple entropy/entropy flux
\begin{equation*}
	\mathcal E(u,v):=\frac{v^2}{2u}+\Pi(u), \qquad
	\mathcal Q(u,v):= \frac{v^3}{2u^2}+\Pi'(u)v.
\end{equation*}
where $\Pi$ is such that $\Pi''(u)=P'(u)/u$.
In particular, given $u_\pm >0$, $v_\pm\in\mathbb{R}$ and $c\in\mathbb{R}$, the function
\begin{equation}\label{jump}
	(u,v)(x,t):=(u_-,v_-)\chi_{{}_{(-\infty,ct)}}(x)+(u_+,v_+)\chi_{{}_{(ct,+\infty)}}(x)
\end{equation}
(where $\chi_{{}_{I}}$ is the characteristic function of the set $I$) is an {\sf admissible solution}
to \eqref{euler} if it is a weak solution and it satisfies the inequality
\begin{equation}\label{PNEineq}
	\partial_t E+ \partial_x\mathcal Q \leq 0
\end{equation}
in the sense of distributions.
Thanks to the galileian invariance, we may assume without loss of generality, the 
speed $c$ to be zero.
Thus, the requirement of being a weak solution translates in the classical
{\sf Rankine-Hugoniot conditions}, that read as
\begin{equation}\label{rh}
	\left[v\right]_0=0, \qquad \left[\frac{v^2}{u}+P(u)\right]_0=0
\end{equation}
where $[g]_\xi:=g_+-g_-$ denotes the jump of the function $g$ at $x=\xi$.
Similarly, the entropy condition \eqref{PNEineq} reads as
\begin{equation}\label{entropy}
	[\mathcal Q]_0=\left[\frac{v^3}{2u^2}+\Pi'(u)v\right]_0\leq 0.
\end{equation}
Conditions \eqref{rh}--\eqref{entropy} select the possible couples $(w_-,w_+)$
such that  jump solution \eqref{jump} defines an admissible solution to \eqref{euler}.
The admissible couples can be explicitly characterized.

\begin{lemma}\label{lemma:propertyF}
Given $P\in C^2([0,+\infty))$ such that $P'(u), P''(u)>0$ for any $u\geq 0$, let us set
\begin{equation}\label{defF}
 	F(u,v):=\frac{v^2}{u}+P(u)\qquad\qquad u>0, v\in\mathbb{R}.
\end{equation}
Then, for any $v_1>0$, there exists $f(v_1)>0$ such that for any $v_2>f(v_1)$
equation $F(u,v_1)=v_2$ possesses exactly two solutions $u_\pm:=u_\pm(v_1,v_2)$.
Moreover, the function $f$ is strictly increasing and convex and such that
$f(0)=P(0)$, $f'(0)=2\sqrt{P'(0)}$.
\end{lemma}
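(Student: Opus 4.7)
The plan is to analyze, for fixed $v_1>0$, the one-variable function $u\mapsto F(u,v_1)=v_1^2/u+P(u)$ on $(0,+\infty)$. First I would compute $\partial_u F(u,v_1)=-v_1^2/u^2+P'(u)$, so that critical points correspond to roots of $u^2P'(u)=v_1^2$. Since the left-hand side vanishes at $u=0$, tends to $+\infty$ as $u\to+\infty$, and has derivative $2uP'(u)+u^2P''(u)>0$ by the hypotheses on $P$, there is a unique critical point $u_\ast=u_\ast(v_1)>0$; this is a strict minimum since $\partial_u F<0$ on $(0,u_\ast)$ and $\partial_u F>0$ on $(u_\ast,+\infty)$. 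Combined with $\lim_{u\to 0^+}F(u,v_1)=\lim_{u\to+\infty}F(u,v_1)=+\infty$, this justifies setting
\[
 f(v_1):=F(u_\ast(v_1),v_1)=\frac{v_1^2}{u_\ast(v_1)}+P(u_\ast(v_1))>0,
\]
after which the intermediate value theorem applied separately on $(0,u_\ast)$ and $(u_\ast,+\infty)$ gives exactly two solutions of $F(u,v_1)=v_2$ whenever $v_2>f(v_1)$.

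Next I would establish monotonicity and convexity via an envelope identity. Because $\partial_u F(u_\ast(v_1),v_1)=0$, differentiating $f$ yields
\[
 f'(v_1)=\frac{\partial F}{\partial v}(u_\ast(v_1),v_1)=\frac{2v_1}{u_\ast(v_1)},
\]
and substituting the critical relation $v_1^2=u_\ast^2 P'(u_\ast)$ produces the clean formula $f'(v_1)=2\sqrt{P'(u_\ast(v_1))}$, which is strictly positive. For convexity, implicit differentiation of $u_\ast^2P'(u_\ast)=v_1^2$ (the implicit function theorem applies because $2u_\ast P'(u_\ast)+u_\ast^2P''(u_\ast)>0$) shows $u_\ast$ is a smooth, strictly increasing function of $v_1$, so $f'=2\sqrt{P'\circ u_\ast}$ is a composition of strictly increasing maps and hence strictly increasing. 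A more conceptual alternative is to observe that $F$ is jointly convex on $(0,+\infty)\times\mathbb{R}$ (the Hessian of $v^2/u$ is positive semidefinite, and $P''>0$ by assumption) so that $f$, being the partial infimum over $u$ of a jointly convex function, is convex.

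Finally, the boundary values $f(0)=P(0)$ and $f'(0)=2\sqrt{P'(0)}$ come from passing to the limit $v_1\to 0^+$: the critical relation $u_\ast^2P'(u_\ast)=v_1^2$ forces $u_\ast(v_1)\to 0$, and using $v_1^2/u_\ast=u_\ast P'(u_\ast)$ to rewrite $f(v_1)=u_\ast(v_1)P'(u_\ast(v_1))+P(u_\ast(v_1))$ gives $f(v_1)\to P(0)$, while $f'(v_1)=2\sqrt{P'(u_\ast(v_1))}\to 2\sqrt{P'(0)}$. No substantial obstacle is anticipated: the only delicate point is guaranteeing that $v_1\mapsto u_\ast(v_1)$ is smooth and strictly increasing, and this is a direct consequence of the implicit function theorem thanks to the strict convexity assumption $P''>0$.
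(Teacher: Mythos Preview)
Your proof is correct and follows essentially the same route as the paper: identify the unique minimizer $u_\ast(v)$ of $F(\cdot,v)$ via $P'(u_\ast)u_\ast^2=v^2$, set $f(v)=F(u_\ast(v),v)$, and obtain $f'(v)=2v/u_\ast>0$. The only notable difference is that the paper establishes convexity by computing $f''=2P''/(P''u_\ast+2P')>0$ explicitly, whereas you argue that $f'=2\sqrt{P'\circ u_\ast}$ is increasing as a composition of increasing maps (or invoke joint convexity of $F$); your treatment of the boundary values $f(0)$, $f'(0)$ is in fact more detailed than the paper's, which leaves that step implicit.
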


\begin{proof} There hold
\begin{equation*}
	\frac{\partial F}{\partial u}(u,v)=-\frac{v^2}{u^2}+P'(u),\qquad
	\frac{\partial^2 F}{\partial u^2}(u,v)=\frac{2v^2}{u^3}+P''(u)
\end{equation*}
Hence, for any $v>0$, the function $F(\cdot,v)$ has a single absolute minimum
point $u_\ast=u_\ast(v)$ uniquely defined by the implicit relation
\begin{equation*}
	P'(u_\ast)u_\ast^2=v^2.
\end{equation*}
Then, the function $f=f(v)$ is defined by
\begin{equation*}
	f(v):=\min_{u>0} F(u,v)=\frac{v^2}{u_\ast}+P(u_\ast)
		=P'(u_\ast)u_\ast+P(u_\ast).
\end{equation*}
Moreover, differentiating we deduce
\begin{equation*}
	\frac{df}{dv}=\left\{P''(u_\ast)u_\ast+2P'(u_\ast)\right\}\,\frac{du_\ast}{dv}
		=\frac{2v}{u_\ast}>0
\end{equation*}
and
\begin{equation*}
	\frac{d^2f}{dv^2}=\frac{2P''}{P''u_\ast+2P'}>0,
\end{equation*}
showing the stated properties of the function $\phi$.
\end{proof}

\begin{proposition}
Let $P\in C^2([0,+\infty))$ be such that $P'(u), P''(u)>0$ for any $u\geq 0$.
For any couple $(v_1,v_2)$ with $v_1>0$ and $v_2>f(v_1)$ there
exists unique $(u_\pm,v_\pm)$ such that
\begin{equation*}
	v_\pm=v_1,\qquad
	F(u_\pm,v_1)=\frac{v_1^2}{u_\pm}+P(u_\pm)=v_2,\qquad
	u_-<u_+
\end{equation*}
such that the function \eqref{jump} is a weak solution to
\eqref{euler} satisfying condition \eqref{entropy}.
\end{proposition}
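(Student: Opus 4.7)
The plan is to verify three things in order: existence and uniqueness of $u_\pm$ satisfying the required constraints, the Rankine--Hugoniot jump relations \eqref{rh}, and the entropy inequality \eqref{entropy}. The first two are essentially immediate from Lemma~\ref{lemma:propertyF}: for $v_1 > 0$ and $v_2 > f(v_1)$ the lemma delivers a unique pair $u_- < u_+$ with $F(u_\pm, v_1) = v_2$, and setting $v_\pm := v_1$ makes $[v]_0 = 0$ and $[F]_0 = v_2 - v_2 = 0$ automatic, so the jump function \eqref{jump} is a weak solution.

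The substance of the proof lies in the entropy inequality $[\mathcal{Q}]_0 \leq 0$, which, after dividing by the common positive value $v_\pm = v_1$, reduces to showing $h(u_+) \leq h(u_-)$ for
\begin{equation*}
	h(u) := \frac{v_1^2}{2u^2} + \Pi'(u).
\end{equation*}
My plan is to exploit a direct identity between $h'$ and $\phi'$, where $\phi(u) := F(u, v_1)$. Using $\Pi''(u) = P'(u)/u$, one obtains
\begin{equation*}
	h'(u) = \frac{P'(u)u^2 - v_1^2}{u^3} = \frac{\phi'(u)}{u},
\end{equation*}
so $h'$ and $\phi'$ share the same sign; both vanish precisely at the minimizer $u_\ast$ from the lemma, with $\phi'(u) < 0$ on $(0, u_\ast)$ and $\phi'(u) > 0$ on $(u_\ast, +\infty)$. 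Because $\phi$ is strictly convex with $\phi(u_-) = \phi(u_+) = v_2 > f(v_1) = \phi(u_\ast)$, one has $u_- < u_\ast < u_+$.

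To close the argument, I would view $u_+$ as a smooth function of $u_- \in (0, u_\ast]$ through the level-set relation $\phi(u_+) = \phi(u_-)$ (the implicit function theorem applies as long as $\phi'(u_+) \neq 0$) and study $D(u_-) := h(u_+(u_-)) - h(u_-)$. Combining $du_+/du_- = \phi'(u_-)/\phi'(u_+)$ with the identity $h' = \phi'/u$, a short calculation yields
\begin{equation*}
	\frac{dD}{du_-} = \phi'(u_-)\Bigl(\frac{1}{u_+} - \frac{1}{u_-}\Bigr),
\end{equation*}
which is strictly positive for $u_- < u_\ast$ since both factors are negative. Since $D(u_\ast) = 0$ (the two states coincide at $u_\ast$), monotonicity forces $D < 0$ throughout $(0, u_\ast)$, giving $[\mathcal{Q}]_0 < 0$ strictly. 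The step requiring the most care is the sign bookkeeping in this monotonicity calculation; everything else is a routine consequence of Lemma~\ref{lemma:propertyF}. Uniqueness of the admissible pair follows for free, since reversing the roles of $u_\pm$ would reverse the sign of $D$ and violate entropy.
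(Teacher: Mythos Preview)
Your proof is correct and rests on the same key identity as the paper's: writing $h(u)=v_1^2/(2u^2)+\Pi'(u)$ (the paper's $\Lambda/2$) and observing $h'(u)=\phi'(u)/u$ where $\phi(u)=F(u,v_1)$. The paper closes with a terse remark that $\Lambda$ shares the monotonicity of $F$ but with ``growth rate that decreases when $u$ increases,'' whereas you make this precise by parametrizing $u_+$ as a function of $u_-$ along the level set and computing $dD/du_-$; your version is more explicit but the underlying argument is the same.
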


\begin{proof}
Couples $(u_\pm,v_\pm)$ connected by a single entropic stationary jump are
such that $v_-$ and $v_+$ are equal with common value denoted by $v_\ast$ and
\begin{equation*}
	F(u_-,v_\ast)=F(u_+,v_\ast).
\end{equation*}
The properties of the function $F$ shows that, given $v_\ast$ there is
a single couple of values for which the above relation is satisfied.
It only remains to analyze condition \eqref{entropy}.

For $v_1>0$, the entropy condition becomes
\begin{equation}\label{entro}
	\Lambda(u_+,v_1):=\frac{v_1^2}{u_+^2}+2\Pi'(u_+)
		\leq \frac{v_1^2}{u_-^2}+2\Pi'(u_-)=\Lambda(u_-,v_1).
\end{equation}
Since there holds
\begin{equation*}
	\frac{\partial\Lambda}{\partial u}(u,v_1)=-\frac{2v_1^2}{u^3}+2\Pi''(u)
		=\frac{2}{u}\frac{\partial F}{\partial u}(u,v_1),
\end{equation*}
the function $\Lambda$ has the same monotonicity of $F$ with growth rate that
decreases when $u$ increases with respect to the one of $F$.
Thus, condition \eqref{entro} is satisfied if and only if $u_+\geq u_-$.
\end{proof}

As an example, in the power-law case $P(u)=\kappa\,u^{\alpha+1}/(\alpha+1)$ with 
$\alpha>0$, there holds
\begin{equation*}
	u_\ast=\kappa^{-\frac{1}{\alpha+2}}\,v^{\frac{2}{\alpha+2}},
	\qquad
	f(v)=\frac{\alpha+2}{\alpha+1}\,\kappa^{\frac{1}{\alpha+2}}\,v^{\frac{2(\alpha+1)}{\alpha+2}}
\end{equation*}
Note that the function $f$ is not differentiable two times at $u=0$.

\subsection*{Approximate invariant manifold}\label{subsec:approximate}

Next, we build a one-parameter family of functions $W=W(\cdot;\xi)$ for $\xi\in (-\ell,\ell)$
forming an approximate invariant manifold for \eqref{ns} and converging as $\varepsilon\to 0$ to
\begin{equation}\label{hypfamily}
	W_{{}_{\textrm{hyp}}}(x,\xi)
	=(u_-,v_\ast)\chi_{{}_{(-\ell,\xi)}}(x)+(u_+,v_\ast)\chi_{{}_{(\xi,+\ell)}}(x)
\end{equation}
Recalling the general procedure presented in Section \ref{sec:scalar}, we want to choose $W$
so that the residual $R(\xi)$ is a delta distribution concentrated at $\xi$.
Thus, given $\xi\in I$, we opt for defining $W$ by matching at $\xi\in I$ at the state
$(u_\ast,v_\ast)$ the two stationary solutions of \eqref{ns} in $(-\ell,\xi)$ and $(\xi,\ell)$.
Precisely, we denote by $W_-=(U_-,V_-)$ and $W_+=(U_+,V_+)$ the solutions to
\begin{equation}\label{steadySV}
	\frac{dv}{dx}=0,\qquad
	\frac{d}{dx}\left\{\frac{v^2}{u}+ P(u)
			-\varepsilon\nu(u)\frac{d}{dx}\left(\frac{v}{u}\right)\right\}=0,
\end{equation}
in $(-\ell,\xi)$ and in $(\xi,\ell)$ respectively, satisfying the boundary conditions
\begin{equation*}
	\begin{aligned}
		W_-(-\ell;\xi)&=(u_-,v_\ast), 	&\qquad	W_-(\xi;\xi)&=(u_\ast,v_\ast),\\
		W_+(\xi;\xi)&=(u_\ast,v_\ast),	&\qquad	W_+(\ell;\xi)&=(u_+,v_\ast),
	\end{aligned}
\end{equation*}
where $u_\ast$ is the absolute minimum point of the function $F(\cdot,v_\ast)$,
i.e. is uniquely determined by the requirement $P'(u_\ast)u_\ast^2=v_\ast^2$
(see Lemma \ref{lemma:propertyF}).

Solutions to \eqref{steadySV} solve
\begin{equation}\label{steadySV2}
	v=v_1,\qquad
	\varepsilon\,\frac{\partial G}{\partial u}(u,v_1)\,\frac{du}{dx}=\kappa-F(u,v_1)
\end{equation}
where $v_1, \kappa$ are integration constants, 
the function $F$ is defined in \eqref{defF} and 
the function $G$ is given by
\begin{equation*}
	G(u,v):=v\int_{\bar u}^{u} \frac{\nu(s)}{s^2}\,ds,
\end{equation*}
for some fixed $\bar u>0$.
From now on, let us consider $v_1>0$.

For $F$ defined in \eqref{defF} and given $v_\ast$, let us set
\begin{equation*}
	\Sigma:=\{(u,\kappa)\,:\,u\in(u_-,u_+),\quad \kappa>F(u,v_\ast)\}
\end{equation*}
and define the function
\begin{equation*}
	\Gamma(u,\kappa):=\int_{u_\ast}^{u} \frac{\partial_u G(s,v_\ast)}{\kappa-F(s,v_\ast)}\,ds
	\qquad\qquad \textrm{for}\;(u,\kappa)\in\Sigma.
\end{equation*}
Then, solutions to \eqref{steadySV} satisfying the condition $u(\xi)=u_\ast$ are implicitly defined by
\begin{equation*}
	v(x)=v_\ast,\qquad \varepsilon\,\Gamma(u(x),\kappa)=x-\xi
\end{equation*}
Denoting by $u_\pm(\kappa)$ the two solution of $F(u,v_\ast)=\kappa$
(see Lemma \ref{lemma:propertyF}), the function $\Gamma$ is such that
\begin{equation*}
	\begin{aligned}
	&\Gamma(u,+\infty)=0,\qquad \Gamma(u_-(\kappa),\kappa)=-\infty,
		\qquad 
		\Gamma(u_+(\kappa),\kappa)=+\infty\\
	&\Gamma(u,\cdot)\;\textrm{is increasing if }u<u_\ast,\qquad
		\Gamma(u,\cdot)\;\textrm{is decreasing if }u>u_\ast.
	\end{aligned}
\end{equation*}
As a consequence, for any $\xi\in(-\ell,\ell)$ there exist (unique)
$\kappa_\pm=\kappa_\pm(\xi)\in(F(u_\pm,v_\ast),+\infty)$ such that
\begin{equation*}
	\varepsilon\,\Gamma(u_-,\kappa_-)=-\ell-\xi
		\qquad\textrm{and}\qquad
	\varepsilon\,\Gamma(u_+,\kappa_+)=\ell-\xi.
\end{equation*}
Correspondingly, we set
\begin{equation*}
	W(x,\xi)=\left\{\begin{aligned}
		&(U_-(x,\xi),v_\ast)	&\qquad &	-\ell<x<\xi<\ell \\
		&(U_+(x,\xi),v_\ast)	&\qquad &-\ell<\xi<x<\ell ,
           \end{aligned}\right.
\end{equation*}
where functions $U_\pm$ are implicitly given by
\begin{equation}\label{implicita}
	\varepsilon\,\Gamma(U_\pm(x,\xi),\kappa_\pm)=x-\xi.
\end{equation}
Next, let us calculate the residual of $W$: for any test function $\Phi=(\varphi_1,\varphi_2)$,
there holds
\begin{equation*}
	\begin{aligned}
	&\langle R(\cdot;\xi),\Phi\rangle
		=\int_{I} \Bigl\{F(U,v_\ast)
			-\varepsilon\frac{\partial G}{\partial u}(U,v_\ast)\frac{\partial U}{\partial x}\Bigr\}\frac{d\varphi_2}{dx}\,dx\\
		&\qquad=\left\{\left(F(U_-,v_\ast))-\varepsilon\frac{\partial G}{\partial u}(U_-,v_\ast)\frac{\partial U_-}{\partial x}\right)
			-\left(F(U_+,v_\ast)-\varepsilon\frac{\partial G}{\partial u}(U_+,v_\ast)
				\frac{\partial U_+}{\partial x}\right)\right\}\biggr|_{x=\xi}\varphi_2(\xi)\\
		&\qquad=\varepsilon 
			\left[\frac{\partial G}{\partial u}(U,v_\ast)\frac{\partial U}{\partial x}\right]_{\xi}\varphi_2(\xi)
	\end{aligned}
\end{equation*}
Differentiating \eqref{implicita} with respect to $x$ we infer
\begin{equation*}
	\varepsilon \frac{\partial G}{\partial u}(U_\pm(x,\xi),v_\ast)\frac{\partial U_\pm}{\partial x}(x,\xi)
		=\kappa_\pm-F(U_\pm(x,\xi),v_\ast)
\end{equation*}
so that, with $r=r(\xi)$ as in \eqref{deltaresidual}, we obtain
\begin{equation*}
	r(\xi)=\left(0,\kappa_+(\xi)-\kappa_-(\xi)\right)
\end{equation*}
The functions $\xi\mapsto \kappa_\pm(\xi)$ are the inverse of the relations
\begin{equation*}
	\xi=-\ell-\varepsilon\,\Gamma(u_-,\kappa_-)
		\qquad\textrm{and}\qquad
	\xi=\ell-\varepsilon\,\Gamma(u_+,\kappa_+),
\end{equation*}
thus, as a consequence of the properties of function $\Gamma$, the difference function
$\xi\mapsto \kappa_+(\xi)-\kappa_-(\xi)$ is monotone increasing and such that
\begin{equation*}
	\lim_{\xi\to-\ell^\mp} \kappa_+(\xi)-\kappa_-(\xi)=-\infty
	\qquad\textrm{and}\qquad
	\lim_{\xi\to+\ell^\mp} \kappa_+(\xi)-\kappa_-(\xi)=+\infty.
\end{equation*}
Therefore, there exists unique $\xi_\ast\in(-\ell,\ell)$ such that $(\kappa_+-\kappa_-)(\xi_\ast)=0$
and such a value is such that $\bigl(U^\varepsilon(\cdot;\xi_\ast),v_\ast\bigr)$ is the unique steady
state of the problem.

Finally, denoting by $\omega_1$ the first eigenfunction of the adjoint operator $\mathcal{L}_\xi^\ast$
and setting $\omega_1(x,\xi)=(\phi(x,\xi),\psi(x,\xi))$, equation \eqref{rededo} becomes
\begin{equation*}
	\frac{d\xi}{dt}= -\frac{\bigl(\phi(\xi,\xi),\psi(\xi,\xi)\bigr)\cdot \left(0,\kappa_+(\xi)-\kappa_-(\xi)\right)}
			{\bigl(\phi(\xi,\xi),\psi(\xi,\xi)\bigr)\cdot \left(u_+-u_-,0\right)}
\end{equation*}
that gives
\begin{equation}\label{rededoNS}
	\frac{d\xi}{dt}=-\frac{\psi(\xi,\xi)}{\phi(\xi,\xi)}\frac{\kappa_+(\xi)-\kappa_-(\xi)}{u_+-u_-}.
\end{equation}
The next step is to determine an appropriate (approximate) representation of the ratio
$\psi(\xi,\xi)/\phi(\xi,\xi)$.
This consists in analyzing in details the eigenvalue problem for the adjoint operator of
the linearization at $(U(x,\xi),v_\ast)$.

\subsection*{First eigenfunction of the adjoint operator}

As a first step, let us derive the specific expression of the linearized problem.
Given $W(x,\xi):=(U(x,\xi),v_\ast)$, let us look for solution to \eqref{ns} in the form $W+(u,v)$.
The perturbation $(u,v)$ satisfies
\begin{equation*}
	\begin{aligned}
	&\frac{\partial u}{\partial t}=-\frac{\partial v}{\partial x}-\frac{\partial U}{\partial\xi}\frac{d\xi}{dt},\\
	&\frac{\partial v}{\partial t}=\frac{\partial}{\partial x}\Bigl\{\rho(x)
			+a_1(x)\,u+a_2(x)\,v+b_1(x)\frac{\partial u}{\partial x}+b_2(x)\frac{\partial v}{\partial x}
			+h.o.t.\Bigr\}
	\end{aligned}
\end{equation*}
where
\begin{equation*}
	\begin{aligned}
		\rho(x)&:=-F(U(x),v_\ast)-\varepsilon\,(G(U(x),v_\ast))'\\
		a_1(x)&:=-\partial_u F(U(x),v_\ast)-\varepsilon\,\partial_{uu} G(U(x),v_\ast)U'(x)\\
		a_2(x)&:=-\partial_v F(U(x),v_\ast)-\varepsilon\,\partial_{uv} G(U(x),v_\ast)U'(x)\\
		b_1(x)&:=-\varepsilon\,\partial_u G(U(x),v_\ast)\\
		b_2(x)&:=\varepsilon\nu(U(x))\,U^{-1}(x)\\
	\end{aligned}
\end{equation*}
and $h.o.t.$ collects higher order terms.
Then, the linear operator $\mathcal L_\xi$ is
\begin{equation*}
	\mathcal L_\xi \begin{pmatrix} u \\ v\end{pmatrix}=\frac{d}{dx}\left\{
		\begin{pmatrix} 0 & -1 \\  a_1(x) & a_2(x)\end{pmatrix}\begin{pmatrix} u \\ v\end{pmatrix}
		+\begin{pmatrix} 0 & 0 \\  b_1(x) & b_2(x)\end{pmatrix}\begin{pmatrix} u' \\ v'\end{pmatrix}
		\right\}
\end{equation*}
Setting $w=(u,v)$ and taking the scalar product against the line vector $\omega=(\phi,\psi)$,
we get
\begin{equation*}
	\begin{aligned}
	\omega\cdot \mathcal L_\xi w&=\frac{d}{dx}\left\{
		\begin{pmatrix} \phi & \psi \end{pmatrix}
		\begin{pmatrix} 0 & -1 \\  a_1 & a_2\end{pmatrix}\begin{pmatrix} u \\ v\end{pmatrix}
		+\begin{pmatrix} \phi & \psi \end{pmatrix}
		\begin{pmatrix} 0 & 0 \\  b_1 & b_2\end{pmatrix}\begin{pmatrix} u' \\ v'\end{pmatrix}
		\right\}\\
	&\qquad 
		-\begin{pmatrix} \phi' & \psi' \end{pmatrix}
		\begin{pmatrix} 0 & -1 \\  a_1 & a_2\end{pmatrix}\begin{pmatrix} u \\ v\end{pmatrix}
		-\begin{pmatrix} \phi' & \psi' \end{pmatrix}
		\begin{pmatrix} 0 & 0 \\  b_1 & b_2\end{pmatrix}\begin{pmatrix} u' \\ v'\end{pmatrix}\\
	&=\frac{d}{dx}\mathcal{J}[w,\omega]+(\mathcal L_\xi^\ast \omega)\cdot w
	\end{aligned}
\end{equation*}
where
\begin{equation*}
	\begin{aligned}
	\mathcal{J}[w,\omega]&=\begin{pmatrix} \phi & \psi \end{pmatrix}
		\begin{pmatrix} 0 & -1 \\  a_1 & a_2\end{pmatrix}\begin{pmatrix} u \\ v\end{pmatrix}\\
		&\quad +\begin{pmatrix} \phi & \psi \end{pmatrix}
		\begin{pmatrix} 0 & 0 \\  b_1 & b_2\end{pmatrix}\begin{pmatrix} u' \\ v'\end{pmatrix}
		-\begin{pmatrix} \phi' & \psi' \end{pmatrix}
		\begin{pmatrix} 0 & 0 \\  b_1 & b_2\end{pmatrix} \begin{pmatrix} u \\ v\end{pmatrix}
	\end{aligned}
\end{equation*}
and 
\begin{equation*}
	\mathcal L_\xi^\ast \begin{pmatrix} \phi \\ \psi\end{pmatrix}=
		\begin{pmatrix} 0 & -a_1(x) \\  1 & -a_2(x)\end{pmatrix}\begin{pmatrix} \phi' \\ \psi'\end{pmatrix}
		+ \frac{d}{dx}\left\{
		\begin{pmatrix} 0 & b_1(x) \\  0 & b_2(x)\end{pmatrix}\begin{pmatrix} \phi' \\ \psi' \end{pmatrix}\right\}
\end{equation*} 
The linearization of the boundary conditions \eqref{bdaryuv2} for the operator $\mathcal{L}_\xi$ gives
\begin{equation}\label{boundary}
		\bigl(U v-v_\ast u\bigr)(\pm\ell)=0,\qquad v(-\ell)=0.
\end{equation}
Thus,  there holds
\begin{equation*}
	\begin{aligned}
	\mathcal{J}[w,\omega]\bigr|_{x=-\ell}&=\psi(-\ell)\bigl\{b_1u'+b_2v'\bigr\}\bigr|_{x=-\ell}=0,\\
	\mathcal{J}[w,\omega]\bigr|_{x=+\ell}&=
		-\phi(+\ell)v(+\ell)+\psi(+\ell)\bigl\{a_1u+a_2v+b_1u'+b_2v'\bigr\}\bigr|_{x=+\ell}=0
	\end{aligned}
\end{equation*}
and the requirement $\mathcal{J}\bigr|_{\partial I}=0$ is satisfied if 
\begin{equation}\label{bdaryadj}
		\phi(+\ell)=0,\qquad \psi(\pm\ell)=0,
\end{equation}
that are the boundary conditions for $\mathcal{L}_\xi^\ast$.

Next, we consider the eigenvalue problem for the adjoint operator $\mathcal{L}_\xi^\ast$
with the aim of determining an approximate expression for the ratio $\psi(\xi)/\phi(\xi)$,
where $(\phi,\psi)$ is the eigenvector relative to the first eigenvalue $\lambda$.
The eigenvalue problem reads as
\begin{equation*}
	\left\{\begin{aligned}
		&-a_1(x)\psi'+\bigl(b_1(x)\psi'\bigr)'=\lambda\phi,\\
		&\phi'-a_2(x)\psi'+\bigl(b_2(x)\psi'\bigr)'=\lambda\psi.
	\end{aligned}\right.
\end{equation*}
with boundary condition \eqref{bdaryadj}.
Setting $\theta:=\psi'$ and $w=(\phi,\psi,\theta)$, after some standard algebraic manipulation,
the above system can be rewritten as
\begin{equation}\label{firstorderadj}
	\frac{dw}{dx}=\mathbb{A}\,w	\quad\textrm{where}\quad
	\mathbb{A}:=
	\begin{pmatrix}
		\lambda\,U/v_\ast 				& \lambda & a_{13} \\
		0							&	0	&	1\\
		-\varepsilon^{-1}\lambda/\partial_u G &	0	& \varepsilon^{-1}\partial_u F/\partial_u G	
	\end{pmatrix}
\end{equation}
with $\partial_u F=\partial_u F(U,v_\ast)$, $\partial_u G=\partial_u G(U,v_\ast)$ and
\begin{equation*}
	a_{13}=-\frac{U}{v_\ast}\left(\frac{v_\ast^2}{U^2}+P'(U)\right)-\varepsilon\frac{\nu'(U)}{U}\frac{dU}{dx}
\end{equation*}
In the limiting regime $\varepsilon\to 0^+$, the profile $U$ tends to a step profile joining at $x=\xi$
the values $u_\pm$.
Thus, we consider the matrix-valued $\mathbb{A}$ as piecewise constant and
a representation for the eigenfunction can be obtained by an appropriate matching at $x=\xi$
of solutions to \eqref{firstorderadj} with $U$ considered as a constant.

In order to determine the spectral decomposition of $\mathbb{A}$ and its behavior in the 
regime $\varepsilon\to 0$, we analyze the characteristic roots $\mu$, solutions to
\begin{equation*}
	\varepsilon\det(\mathbb{A}-\mu I)=-\varepsilon\mu^3
		+\left(\frac{\partial_u F}{\partial_u G}+\varepsilon\frac{\lambda U}{v_\ast}\right)\mu^2
		+\frac{2\lambda U}{\nu(U)}\mu-\frac{\lambda^2}{\partial_u G}=0
\end{equation*}
Plugging expansion for $\mu$ in the form $\mu=\mu_1 \varepsilon^{-1}+\mu_0+o(1)$ as
$\varepsilon\to 0$ and skipping the calculations for shortness, we determine asymptotic
formulas for the three roots $\mu_0, \mu_\pm$ 
\begin{equation}\label{asymptoticmu}
	\mu=\mu_{0}=\frac{1}{\varepsilon}\,\frac{\partial_u F}{\partial_u G}+o(\varepsilon^{-1}),
	\qquad
	\mu=\mu_\pm=C_\pm\lambda+o(1).
\end{equation}
with $C_\pm$ are such that
\begin{equation*}
	\partial_u F(U,v_\ast)\,C^2+2\,v_\ast\,U\,C-1=0,
\end{equation*}
that gives, thanks to the specific form of the function $F$,
\begin{equation}\label{defC}
	C_\pm:=\frac{2}{v_\ast U^{-1}\pm\sqrt{P'(U)}}.
\end{equation}
Corresponding expression for the right/left eigenvectors $r/\ell$of $\mathbb{A}$ relative to the eigenvalues
$\mu_{0,\pm}$ can be determined starting from the relations
\begin{equation*}
	\left\{\begin{aligned}
		&\mu\,r_2-r_3=0,\\ 
		&\lambda r_1 + \bigl(\varepsilon\partial_u G\mu-\partial_u F\bigr)r_3=0.
	\end{aligned}\right.
	\qquad
	\left\{\begin{aligned}
		&\left(\frac{\lambda U}{v_\ast}-\mu\right)r_1
			-\frac{1}{\varepsilon}\,\frac{\lambda}{\partial_u G}r_3=0.\\
		&\lambda\,r_1-\mu\,r_2=0,\\
	\end{aligned}\right.
\end{equation*}
where $r=(r_1,r_2,r_3)$  and $\ell=(\ell_1,\ell_2,\ell_3)$.
Taking advantage of the expansions \eqref{asymptoticmu}, we infer for $\mu_0$ the expression
\begin{equation*}
	r_0=(0,0,1)+o(1),\qquad \ell_0=(-\lambda/\partial_u F,0,1)+o(1)
\end{equation*}
and for $\mu_\pm$, using the notation given in \eqref{defC},
\begin{equation*}
	r_\pm=\frac{1}{1+C_\pm^2\partial_u F }(C_\pm\partial_u F,1,C_\pm\lambda)+o(1),
	\qquad
	\ell_\pm=(C_\pm,1,0)+o(1)
\end{equation*}
Thus, disregarding the $o(1)$-terms in the expression for the right and left eigenvectors,
we may introduce the projection matrices
\begin{equation*}
	\begin{aligned}
	\mathbb{P}_0&:=r_0\otimes\ell_0
		=\frac{1}{\partial_u F}\begin{pmatrix}
				0 & 0 & 0 \\ 0 & 0 & 0 \\ -\lambda & 0 & \partial_u F
		\end{pmatrix}
		\\
	\mathbb{P}_\pm&:=r_\pm\otimes\ell_\pm
		=\frac{1}{1+C_\pm^2\partial_u F }\begin{pmatrix}
				C_\pm^2 \partial_u F		& C_\pm \partial_u F & 0 \\
				C_\pm				& 1				& 0 \\
				C_\pm^2\lambda		& C_\pm \lambda	& 0
		\end{pmatrix}
	\end{aligned}
\end{equation*}
Finally, we get an approximated expression for the exponential matrix of $\mathbb{A}$
\begin{equation*}
	e^{\mathbb{A}x}\approx P_0 e^{\mu_0 x}+P_- e^{\mu_- x}+P_+ e^{\mu_+ x}.
\end{equation*}
Considering the solution in the sub-interval $(-\ell,\xi)$ and taking into account
the boundary condition $\psi(-\ell)=0$, we deduce
\begin{equation*}
	w(\xi)=\bigl(\phi(\xi),\psi(\xi),\theta(\xi)\bigr)
		=e^{\mathbb{A}(\xi+\ell)}\bigl(\phi(0),0,\theta(0)\bigr).
\end{equation*}
Hence, we deduce
\begin{equation*}
	\begin{aligned}
	\frac{\phi(\xi)}{\phi(0)} &\approx \partial_u F\left\{\frac{C_-^2}{1+C_-^2\partial_u F }\,e^{\mu_- (\xi+\ell)}
		+\frac{C_+^2}{1+C_+^2\partial_u F } e^{\mu_+ (\xi+\ell)}\right\}\\
	\frac{\psi(\xi)}{\phi(0)} &\approx \frac{C_-}{1+C_-^2\partial_u F } e^{\mu_- (\xi+\ell)}
		+\frac{C_+}{1+C_+^2\partial_u F } e^{\mu_+ (\xi+\ell)}
	\end{aligned}
\end{equation*}
where $\partial_u F$ is calculated at $(u_-,v_\ast)$.
For $\lambda\to 0$, there holds
\begin{equation*}
	\frac{\psi(\xi)}{\phi(0)}
		\approx \frac{C_-}{1+C_-^2\partial_u F }+\frac{C_+}{1+C_+^2\partial_u F }+o(1)
		=\frac{(C_-+C_+)(1+C_-C_+\partial_u F)}{(1+C_+^2\partial_u F)(1+C_+^2\partial_u F)}+o(1)
		=o(1)
\end{equation*}
having used the relation $C_-C_+=-1/\partial_u F$.
Thus, we infer from the expansion for $\mu_\pm$, the relation
\begin{equation*}
	\frac{\psi(\xi)}{\phi(0)}
		\approx \left\{\frac{C_-^2}{1+C_-^2\partial_u F }
			+\frac{C_+^2}{1+C_+^2\partial_u F } \right\}(\xi+\ell)\lambda+o(\lambda)
		=\frac{\phi(\xi)}{\phi(0)}\frac{(\xi+\ell)}{\partial_u F}\,\lambda+o(\lambda)
\end{equation*}
Hence, we end up with the asymptotic expression
\begin{equation*}
	\frac{\psi(\xi)}{\phi(\xi)}\approx \frac{\xi+\ell}{\partial_u F(u_-,v_\ast)}\,\lambda.
\end{equation*}
Plugging into \eqref{rededoNS}, we deduce the new form
\begin{equation}\label{rededoNS2}
	\frac{d\xi}{dt}\approx -\frac{(\xi+\ell)\lambda_1(\xi)}{\partial_u F(u_-,v_\ast)}\frac{\kappa_+(\xi)-\kappa_-(\xi)}{u_+-u_-}.
\end{equation}
where $\lambda_1(\xi)$ denotes the first eigenvalue of the operator $\mathcal{L}_\xi$.

Note that, since $\partial_u F(u_-,v_\ast)<0$ and $\kappa_+-\kappa_-$ is an increasing function,
the equilibrium point $\xi_\ast$ is stable if and only if $\lambda_1(\xi)<0$, as it should be.

\subsection*{Further simplification of the slow motion equation}

As a last step, we derive an approximate expression for the functions $\kappa_\pm$
showing that the motion is indeed exponentially slow. 
Such functions are implicitly defined by the relations
\begin{equation*}
	\int_{u_\ast}^{u_\pm} \frac{\partial_u G(s,v_\ast)}{\kappa_\pm-F(s,v_\ast)}\,ds
	=-\frac{\xi\mp\ell}{\varepsilon}
\end{equation*}
In the limit $\varepsilon\to 0^+$, the righthand side blows up, hence $\kappa_\pm$
is such that
\begin{equation*}
	\lim_{\varepsilon\to 0^+} \kappa_\pm(\xi)=F(u_\pm,v_\ast).
\end{equation*}
To get a preciser description of $\kappa_\pm$ for small $\varepsilon$, 
we approximate functions $F$ and $\partial_u G$ by
\begin{equation*}
	\begin{aligned}
		F(s,v_\ast) & \approx F(u_\pm,v_\ast)+\partial_u F(u_\pm,v_\ast)(s-u_\pm),\\
		\partial_u G(s,v_\ast) & \approx \partial_u G(u_\pm,v_\ast)
	\end{aligned}
\end{equation*}
Setting $h_\pm(\xi):=\kappa_\pm(\xi)-F(u_\pm,v_\ast)$, we obtain
\begin{equation*}
	\int_{u_\ast}^{u_\pm} \frac{\partial_u G(u_\pm,v_\ast)}
		{h_\pm(\xi)-\partial_u F(u_\pm,v_\ast)(s-u_\pm)}\,ds
		=-\frac{\xi\mp\ell}{\varepsilon}
\end{equation*}
Integrating, it follows
\begin{equation*}
	\frac{\partial_u G(u_\pm,v_\ast)}{\partial_u F(u_\pm,v_\ast)}
		\ln\left(1-\frac{\partial_u F(u_\pm,v_\ast)(u_\ast-u_\pm)}{h_\pm(\xi)}
			\right)
	=-\frac{\xi\mp\ell}{\varepsilon}
\end{equation*}
from which we infer
\begin{equation*}
	h_-(\xi)=-\frac{\partial_u F^-(u_\ast-u_-)}
		{\exp\left\{-\frac{\partial_u F^-}{\partial_u G^-}
		\frac{\xi+\ell}{\varepsilon}\right\}-1},\quad
	h_+(\xi)=\frac{\partial_u F^+(u_+-u_\ast)}
		{\exp\left\{\frac{\partial_u F^+}{\partial_u G^+}
		\frac{\ell-\xi}{\varepsilon}\right\}-1}
\end{equation*}
where $\partial_u F^\pm=\partial_u F(u_\pm,v_\ast)$ and
$\partial_u G^\pm=\partial_u G(u_\pm,v_\ast)$.
Finally, since $F(u_-,v_\ast)=F(u_+,v_\ast)$, we obtain
\begin{equation*}
	\kappa_+(\xi)-\kappa_-(\xi)
		\approx \partial_u F^+(u_+-u_\ast)\exp\left\{-\frac{\partial_u F^+}{\partial_u G^+}
			\frac{\ell-\xi}{\varepsilon}\right\}
			+\partial_u F^-(u_\ast-u_-)\exp\left\{\frac{\partial_u F^-}{\partial_u G^-}s
			\frac{\xi+\ell}{\varepsilon}\right\}
\end{equation*}
as $\varepsilon\to 0$.
Inserting into \eqref{rededoNS2}, we obtain the equation
\begin{equation}\label{rededoNS3}
	\frac{d\xi}{dt}=-\biggl\{\frac{\partial_u F^+}{\partial_u F^-}\frac{u_+-u_\ast}{u_+-u_-}
		\exp\left(-\frac{\partial_u F^+}{\partial_u G^+}\frac{\ell-\xi}{\varepsilon}\right)
			+\frac{u_\ast-u_-}{u_+-u_-}
	\exp\left(\frac{\partial_u F^-}{\partial_u G^-}\frac{\xi+\ell}{\varepsilon}\right)\biggr\}
		(\xi+\ell)\lambda_1(\xi)
\end{equation}
which is th ultimate version of the equation for the dynamics along the 
approximate manifolds.

\section{Conclusions}\label{sec:conclusions}

Equation \eqref{rededoNS3} describes the motion's equation along the 
approximate manifold $\{W(\cdot,\xi)\}$ and it should be considered as an equation
for the movement of the transition layer from $(u_-,v_\ast)$ to $(u_+,v_\ast)$ in 
the bounded interval $(-\ell,\ell)$ due to the boundary conditions.
To our knowledge, this is the first attempt of determining a relation that should 
be capable to quantify the boundary effects in the case of isentropic Navier--Stokes
equation for compressible fluids.
Starting from it, a number of possible issues amenable to future investigations arise.
We propose a list of eventual topics, dictated by our personal taste.

1. The determination of the size and, mainly, the sign of the first eigenvalue $\lambda_1$
of the linearized operator $\mathcal L_\xi$ has not yet been explored. 
In particular, its sign is fundamental to determine stability of the single steady state.
Comparing with the case of scalar Burgers equation, it is expected that asymptotic stability
holds and thus that $\lambda_1$ is negative. 

2. The manifold builded to determine the form of the motion's equation is approximated.
The residual is in any case exponentially small and thus the dynamics remain confinated
in a small neighborhood around such manifold.
In analogy with the results proved for the Burgers equation, it would be interesting to
analyze if there exists an exact invariant manifold, lying close to the manifold of the approximate
solutions, and determine a corresponding equation for the dynamics along it.

3. The equation for the location $\xi$ of the transition layer is almost explicit.
The unique term remaining unknown is the first eigenvalue $\lambda_1$ that could
be determined numerically.
We wonder if it would be possible to verify numerically the reliability of the equation,
bypassing the smallness of the right hand side and, thus, the very long timescale of
the shifting phenomenon.

4. As shown at the beginning of Section 2, the approach is formal but flexible and, in 
principle, capable to be applied to different systems fitting into the general class of
hyperbolic--parabolic systems.
Among others, the extension of the approach to the case of non-isentropic
Navier--Stokes would be particularly intruiguing.

5. Following results available in the literature (see Introduction), the case of the half-line
could also be taken into account.
In particular, to quantify by means of a motion's equation the effect of
the presence of a single boundary seems to be a natural variant of the case considered here


\end{document}